\documentclass[10pt]{amsart}

\usepackage{float}
\usepackage{amscd,amsmath,amssymb,amsfonts,amsthm, ascmac, bm}
\usepackage{enumerate, comment, mathtools} 
\usepackage{lscape}

\usepackage{caption}
\usepackage{subcaption}
\usepackage{hyperref}
\usepackage{relsize}

\usepackage{booktabs}
\usepackage{a4wide}
\usepackage{verbatim}

\usepackage[mathscr]{euscript}

\usepackage{ragged2e}

\usepackage{xcolor}
\usepackage[linesnumbered,ruled,vlined]{algorithm2e}


\SetCommentSty{mycommfont}

\SetKwInput{KwInput}{Input}                
\SetKwInput{KwOutput}{Output}              

\usepackage{tikz-cd}
\usepackage{tikz, tikz-3dplot}

\numberwithin{equation}{section}
\allowdisplaybreaks[1]

\usepackage{colonequals} 

\captionsetup[subfigure]{labelformat=simple}

\allowdisplaybreaks


\DeclareMathOperator{\SL}{SL}

\newcommand{\C}{{\mathbb{C}}}
\newcommand{\R}{{\mathbb{R}}}

\newcommand{\Z}{{\mathbb{Z}}}
\newcommand{\Cstar}{{\C^{\ast}}}

\DeclareMathOperator{\flag}{Fl}


\setcounter{MaxMatrixCols}{20}

\theoremstyle{plain}
\newtheorem{theorem}{Theorem}[section]

\newtheorem{proposition}[theorem]{Proposition}
\newtheorem{corollary}[theorem]{Corollary}

\theoremstyle{definition}

\newtheorem{definition}[theorem]{Definition}

\theoremstyle{remark}
\newtheorem{remark}[theorem]{Remark}


%
%

\setlength{\marginparwidth}{0.7in}

\hypersetup{colorlinks}
\hypersetup{
	unicode=false,          
	pdftoolbar=true,        
	pdfmenubar=true,        
	pdffitwindow=false,     
	pdfstartview={FitH},    
	pdftitle={My title},    
	pdfauthor={Author},     
	pdfsubject={Subject},   
	pdfcreator={Creator},   
	pdfproducer={Producer}, 
	pdfkeywords={keyword1} {key2} {key3}, 
	pdfnewwindow=true,      
	colorlinks=true,       
	linkcolor=blue,          
	citecolor=red,        
	filecolor=violet,      
	urlcolor=violet       
}

\begin{document}

\author{Yunhyung Cho}
\address[Yunhyung Cho]{Department of Mathematics Education, Sungkyunkwan University, Seoul, Republic of Korea}
\email{yunhyung@skku.edu}

\author{Naoki Fujita}
\address[Naoki Fujita]{Faculty of Advanced Science and Technology, Kumamoto University, 2-39-1 Kurokami, Chuo-ku, Kumamoto 860-8555, Japan}
\email{fnaoki@kumamoto-u.ac.jp}

\author{Akihiro Higashitani}
\address[Akihiro Higashitani]{Graduate School of Information Science and Technology, Osaka University, Osaka 565-0871, Japan}
\email{higashitani@ist.osaka-u.ac.jp}

\author{Eunjeong Lee}
\address[Eunjeong Lee]{Department of Mathematics,
	Chungbuk National University,
	Cheongju 28644, Republic of Korea}
\email{eunjeong.lee@chungbuk.ac.kr}

\thanks{Lee is the corresponding author. \\
Cho was supported by the National Research Foundation of Korea(NRF) grant funded by the Korea government(MSIP; Ministry of Science, ICT \& Future Planning) (No.\ 2020R1C1C1A01010972) and (No.\ 2020R1A5A1016126). 
Fujita was supported by JSPS Grant-in-Aid for Early-Career Scientists (No.\ 20K14281) and by MEXT Japan Leading Initiative for Excellent Young Researchers (LEADER) Project. 
Lee was supported by the National Research Foundation of Korea(NRF) grant funded by the Korea government(MSIT) (No.\ RS-2023-00239947).}

\title[Newton--Okounkov polytopes of small ranks]{Newton--Okounkov polytopes of type $A$ flag varieties of small ranks arising from  cluster structures}

\date{\today}

\subjclass[2020]{Primary: 14M15;  Secondary: 05E10, 14M25, 13F60}

\keywords{Flag varieties, Newton--Okounkov bodies, cluster algebras, toric varieties}

\begin{abstract} 
A flag variety is a smooth projective homogeneous variety. 
In this paper, we study Newton--Okounkov polytopes of the flag variety $\flag(\C^4)$ arising from its cluster structure. More precisely, we present defining inequalities of such Newton--Okounkov polytopes of $\flag(\C^4)$. Moreover, we classify these polytopes, establishing their equivalence under unimodular transformations. 
\end{abstract}

\maketitle

\setcounter{tocdepth}{1} 
\tableofcontents

\section{Introduction} 
%

Let $G$ be a semisimple algebraic group, $B$ a Borel subgroup, and $H$ a maximal torus contained in $B$. The homogeneous space $G/B$ is a smooth projective variety, called a \emph{\textup{(}full\textup{)} flag variety}. 
The geometry of flag varieties provides fruitful avenues connecting the geometry, combinatorics, and representation theory (see, for example, \cite{Brion_Lecture, Fulton97Young}).  

A flag variety is not a toric variety in general but there are known toric degenerations (see, for instance, \cite{Caldero02, FFL17}). Especially, the theory of Newton--Okounkov bodies provides a method to construct toric degenerations~\cite{An13}. 
Recently, the second named author and Oya~\cite{FujitaOya} considered Newton--Okounkov polytopes of flag varieties arising from cluster structures. Moreover, they proved that the family of these polytopes contains already known interesting classes of polytopes, for instance, Berenstein--Littelmann--Zelevinsky's string polytopes~\cite{Littelmann98, BZ_tensor_product_01} (also see~\cite{Kav15}), and Nakashima--Zelevinsky polytopes~\cite{NZ_polyhedral_97, Nakashima99, FujitaSatoshi17}.
While the combinatorial description of string polytopes in type $A$ is provided in~\cite{GP00}, the description of the remaining Newton--Okounkov polytopes considered in~\cite{FujitaOya} is not so much studied.

In this paper, we consider Newton--Okounkov polytopes of~$\flag(\C^n)$ considered in~\cite{FujitaOya} for $n \leq 4$. 
Indeed, we provide their defining inequalities and study the unimodular equivalence classes. 
Here, we say that two rational convex polytopes $P$ and $Q$ in $\R^N$ are unimodularly equivalent if there exists an affine map $f \colon \mathbf{x} \mapsto A \mathbf{x} + \mathbf{b}$ with $A \in \mathrm{GL}_N(\Z)$ and $\mathbf{b} \in \Z^N$ such that $f(P) = Q$. Accordingly, toric varieties defined by unimodularly equivalent polytopes are isomorphic as algebraic varieties.

The \emph{exchange graph} $\Gamma$ for a cluster structure is a graph whose vertices are labeled by seeds and whose edges by mutations, that is, two vertices are connected by an edge if and only if the corresponding seeds are related by a mutation. 
For a cluster structure for $\flag(\C^4)$, there are $14$ seeds and the corresponding exchange graph is shown in Figure~\ref{figure_exchange_graph_A3}. We prove that there exists a certain symmetry on the exchange graph preserving the unimodularity of the corresponding Newton--Okounkov polytopes. 
Indeed, we prove that 
there exist two involutions $\iota$ and $\iota'$ on the exchange graph of type $A_3$ such that 
\[
\Delta_{\bm s} \cong \Delta_{\iota(\bm s)} \quad \text{ and } \quad 
\Delta_{\bm s} \cong \Delta_{\iota'(\bm s)}, 
\]
where $\Delta_{\bm s} = \Delta(\flag(\C^4), \mathcal{L}_{\lambda}, v_{\bm s})$ is the Newton--Okounkov body arising from the cluster structure for each seed $\bm s$ and $\mathcal{L}_{\lambda}$ is the anticanonical line bundle of $\flag(\C^4)$ (see Theorem~\ref{thm_involutions}). 
Here, for rational convex polytopes $P$ and $Q$, we denote by $P \cong Q$ if $P$ and $Q$ are unimodularly equivalent. 
Considering the $(\Z_2 \times \Z_2)$-action generated by the involutions $\iota$ and $\iota'$ on the exchange graph, we obtain 6 orbits, which implies that there exist at most 6 unimodular equivalence classes. We demonstrate that there are 5 unimodular equivalence classes, that is, two orbits provide the unimodularly equivalent polytopes. 
To obtain the above result, we use the \emph{tropicalized mutations} considered in~\cite{FujitaOya} to relate Newton--Okounkov polytopes and find defining inequalities of  $\Delta_{\bm s}$ in Section~\ref{section_NOBYs_flag4}.
%
%
%
%
%
%

This paper is organized as follows. In Section~\ref{section_NOBYs_flag4}, we consider the description of Newton--Okounkov polytopes of $\flag(\C^n)$ arising from the cluster structure for $n \leq 4$. In Section~\ref{sec_unimodular}, we provide unimodular maps among the Newton--Okounkov polytopes.

\section{Description of Newton--Okounkov polytopes}\label{section_NOBYs_flag4}
%

In this section, we recall Newton--Okounkov polytopes of flag varieties arising from cluster structures from~\cite{FujitaOya} (also, see~\cite{HaradaKaveh15, Kav15, KK12}). We notice that Newton--Okounkov bodies were introduced independently by Kaveh--Khovanskii and by Lazarsfeld--Mustata in~\cite{KK12, kaveh2008convex} and in~\cite{LM09}, respectively.
We also provide a description of Newton--Okounkov polytopes of $\flag(\C^4)$. 

We begin with the definition of Newton--Okounkov bodies.
Let $R$ be a $\C$-algebra and assume that $R$ is a domain.  Fix a total order $\leq$ on $\Z^m$, $m \in \Z_{>0}$, respecting the addition. 
\begin{definition}[{see, for instance~\cite[Definition~3.1]{HaradaKaveh15} and~\cite[Definition~2.1]{FujitaOya}}]
A map $v \colon R \setminus \{0\} \to \Z^m$ is called a \emph{valuation} on $R$ if the following holds: for every $\sigma, \tau \in R \setminus \{0\}$ and $c \in \Cstar = \C \setminus \{0\}$, 
\begin{enumerate}
\item $v(\sigma \cdot \tau) = v(\sigma) + v(\tau)$,
\item $v(c \cdot \sigma) = v(\sigma)$,
\item $v(\sigma + \tau) \geq \min \{ v(\sigma), v(\tau)\}$ unless $\sigma+\tau = 0$.
\end{enumerate}
Moreover, we say the valuation $v$ has \emph{one-dimensional leaves} if it satisfies that if $v(\sigma) = v(\tau)$, then there exists $c \in \Cstar$ such that $\tau-c\sigma = 0$ or $v(\tau-c\sigma) > v(\tau)$.
\end{definition}

A Newton--Okounkov body $\Delta(X, \mathcal{L}, v)$ is a convex body constructed from a variety $X$ with a globally generated line bundle $\mathcal{L}$ and a valuation $v$ on the function field $\C(X)$. 
\begin{definition}[{see, for instance~\cite[Section~1.2]{Kav15} and~\cite[Definition~2.4]{FujitaOya}}]
Let $X$ be an irreducible normal projective variety over $\C$ of complex dimension $m$, and let $\mathcal L$ be a line bundle on $X$ generated by global sections. Take a valuation $v \colon \C(X) \setminus \{0\} \to \Z^m$ having one-dimensional leaves and fix a nonzero section $\tau \in H^0(X, \mathcal{L})$. 
The \emph{Newton--Okounkov body} $\Delta(X,\mathcal{L},v,\tau)$ of $X$ associated with $(\mathcal{L},v,\tau)$ is defined by 
\[
\Delta(X, \mathcal{L}, v, \tau) \colonequals \overline{\bigcup_{k \in \mathbb{Z}_{>0}} \left\{\frac{1}{k} v(\sigma/\tau^k) \mid \sigma \in H^0(X, \mathcal{L}^{\otimes k}) \setminus \{0\}\right\}}.
\]
When the set $\Delta(X, \mathcal{L},v,\tau)$ is a polytope, then we call it a \emph{Newton--Okounkov polytope}. 
\end{definition}

We briefly recall (Fomin--Zelevinsky) cluster structures from~\cite{BFZ05, FZ02_I} (also, see~\cite[Section~3]{FujitaOya}). 
Let $J$ be a finite set and $J_{\text{uf}} \subset J$. 
Let $\mathcal F = \C(z_j \mid j \in J)$ be the field of rational functions in $|J|$ variables. Then a \emph{seed} ${\bm s} = ({\bm A}, \varepsilon)$ of $\mathcal F$ is a pair of 
\begin{itemize}
\item a $J$-tuple ${\bm A} = (A_j)_{j \in J}$ of elements of $\mathcal F$;
\item $\varepsilon = (\varepsilon_{i,j})_{i \in J_{\text{uf}}, j \in J} \in \text{Mat}_{J_{\text{uf}} \times J}(\Z)$
\end{itemize}
such that ${\bm A}$ forms a free generating set of $\mathcal F$, and the $J_{\text{uf}} \times J_{\text{uf}}$-submatrix of $\varepsilon$ is skew-symmetrizable, that is, there exist positive integers $(d_i)_{i \in J_{\text{uf}}}$ such that $\varepsilon_{i,j} d_i = -\varepsilon_{j,i}d_j$ for all $i,j \in J_{\text{uf}}$. We call the matrix $\varepsilon$ the \emph{exchange matrix} of ${\bm s}$. 
When an exchange matrix is skew-symmetric, then we obtain a directed graph, called a \emph{quiver}, whose adjacency matrix is the minus of the exchange matrix $\varepsilon$.

Let ${\bm s} = ({\bm A}, \varepsilon)$ be a seed of $\mathcal{F}$. For $k \in J_{\text{uf}}$, the \emph{mutation} $\mu_k({\bm s}) = (\mu_k({\bm A}), \mu_k(\varepsilon)) = ({\bm A}', \varepsilon')$ in direction $k$ is defined as follows. 
\[
\begin{split}
\varepsilon_{i,j}' &= \begin{cases} 
-\varepsilon_{i,j} & \text{ if } i = k \text{ or }j = k, \\
\varepsilon_{i,j} + \text{sgn}(\varepsilon_{i,j})[\varepsilon_{i,k} \varepsilon_{k,j}]_+ & \text{ otherwise}.
\end{cases} \\
A_i' &= \begin{cases}
\displaystyle \frac{\prod_{j \in J} A_j^{[\varepsilon_{k,j}]_+} + \prod_{j \in J} A_j^{[-\varepsilon_{k,j}]_+}}{A_k}
& \text{ if } i =k, \\
A_i & \text{ otherwise}.
\end{cases}
\end{split}
\]
Here, we write ${\bm A}^\prime = (A^\prime_j)_{j \in J}$ and $\varepsilon^\prime = (\varepsilon_{i,j}^\prime)_{i \in J_{\textrm{uf}}, j \in J}$. Moreover, we write $[a]_+ \colonequals \max\{a,0\}$.

Let $\mathbb{T}$ be the $|J_{\text{uf}}|$-regular tree whose edges are labeled by $J_{\text{uf}}$ such that the edges of  each vertex have different labels. A \emph{cluster pattern} ${\bf\mathcal{S}} = \{{\bm s}_t\}_{t \in \mathbb{T}} = \{({\bm A}_t, \varepsilon_t)\}_{t \in \mathbb{T}}$ is an assignment of a seed~${\bm s}_t$ of $\mathcal{F}$ to each vertex $t \in \mathbb{T}$ such that $\mu_k({\bm s}_t) = {\bm s}_{t'}$ whenever $\begin{tikzcd}[column sep = 1.5em]
t \arrow[r,dash, "k"] & t'
\end{tikzcd}$, that is, vertices $t$ and $t'$ are connected by an edge having label $k$. For $t \in \mathbb{T}$, we denote by $A_{t;j}$ the $j$th entry of $\bm{A}_t$ and by $\varepsilon_{i, j}^{(t)}$ an entry of $\varepsilon_t$.
We say two seeds $({\bm A}, \varepsilon)$ and $({\bm A}', \varepsilon')$ are \emph{equivalent} if there exists a permutation~$\sigma$ on $J$ such that $\sigma(j) = j$ for $j \in J \setminus J_{\textrm{uf}}$, $A_j' = A_{\sigma(j)}$, and $\varepsilon_{i,j}' = \varepsilon_{\sigma(i), \sigma(j)}$ for $i \in J_{\text{uf}}, j \in J$. 
The \emph{exchange graph} of a cluster pattern ${\bf\mathcal{S}} = \{{\bm s}_t\}_{t \in \mathbb{T}}$ is  a quotient of the tree $\mathbb{T}$ modulo the equivalence relation on the vertices defined by setting $t \sim t'$ if and only if the corresponding seeds are equivalent. 
We say that an irreducible algebraic variety $X$ has a \emph{cluster structure} if the coordinate ring $\mathbb{C}[Y]$ of an open subvariety $Y \subset X$ is isomorphic to the upper cluster algebra $\textrm{up}(\mathcal {A}) \colonequals \bigcap_{t \in \mathbb{T}} \C[A_{t;j}^{\pm 1} \mid j \in J]$ of a cluster pattern ${\bf\mathcal{S}} = \{({\bm A}_t, \varepsilon_t)\}_{t \in \mathbb{T}}$ as a $\C$-algebra.

Let $G$ be a simply-connected semisimple algebraic group over $\C$, $B$ a Borel subgroup of $G$, and $H$ a maximal torus satisfying $H \subset B$. We call the homogeneous space $G/B$ the \emph{flag variety}, which is an irreducible smooth projective variety. 
In this paper, we assume $G = \SL_n(\C)$, that is, $G$ is of type $A$. In this case, the flag variety $G/B$ is an algebraic variety of complex dimension $n(n-1)/2$ which can be identified with the set of flags:
\[
G/B \cong \flag(\C^n) \colonequals 
\{ (\{0\} \subsetneq V_1 \subsetneq \cdots \subsetneq V_{n-1} \subsetneq \C^n) \mid \dim_{\C} V_i = i \quad \text{ for all }i=1,\dots,n-1\}.
\]

The Weyl group $W \colonequals N_G(H)/H$ of $G$ is isomorphic to the symmetric group $\mathfrak{S}_n$ which is generated by simple transpositions $s_i$ for $i=1,\dots,n-1$. 
We denote by $w_0^{(n)} \in \mathfrak{S}_n$ the longest element, that is, $w_0^{(n)}(i) = n+1-i$ for $1 \leq i \leq n$, and by $\ell = \ell(w_0^{(n)})$ the length of $w_0^{(n)}$. Let
\[
R(w_0^{(n)}) \colonequals \{ {\bm i} = (i_1,\dots,i_{\ell}) \in [n-1]^{\ell} \mid w_0^{(n)} = s_{i_1}s_{i_2} \cdots s_{i_{\ell}} \}
\]
be the set of reduced words for $w_0^{(n)}$.  
For $n=4$, $\ell(w_0^{(4)})=6$ and there are $16$ reduced words as in Figure~\ref{figure_reduced_words_for_w0}. 
According to Tits' theorem~\cite{Tits69}, every pair of reduced words is connected by a sequence of the following moves:
\begin{itemize}
\item (2-move) exchanging $(i,j)$ with $(j,i)$ for $|i-j| > 1$, i.e., $s_i s_j = s_j s_i$.
\item (3-move) exchanging $(i,j,i)$ with $(j,i,j)$ for $|i-j| = 1$, i.e., $s_i s_{i+1}s_i = s_{i+1} s_i s_{i+1}$. 
\end{itemize}
In the figure, two reduced words are connected by a dashed line if they are related via a $2$-move; two reduced words are connected by a line if they are related via a $3$-move.
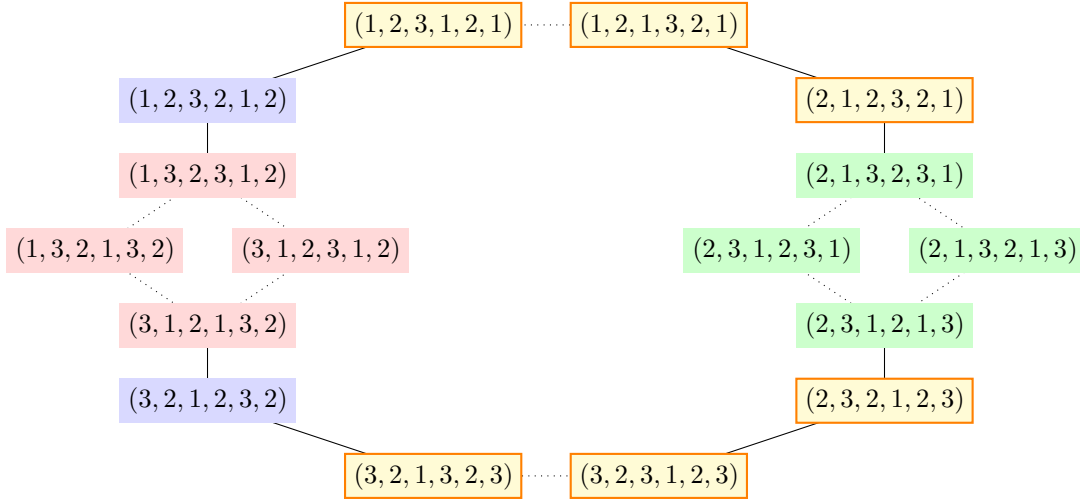
\begin{figure}
\begin{tikzpicture}[ yscale = 1, xscale = 1.5]
		

		\draw[dotted] (0,0) --(2,0);
		\draw (2,0) -- (4,1) -- (4,2);
		\draw[dotted] (4,2) -- (5,3) -- (4,4) -- (3,3) -- cycle;
		\draw (4,4) -- (4,5) -- (2,6);
		\draw[dotted] (2,6) -- (0,6);
		\draw (0,6) -- (-2,5) -- (-2,4);
		\draw[dotted] (-2,4) --(-3,3) --(-2,2) -- (-1,3) --cycle;
		\draw (-2,2) -- (-2,1) -- (0,0);
		
		\draw(0,0) node [{fill=yellow!20!white, draw=orange, thick}] {$(3,2,1,3,2,3)$};
		\draw (2,0) node [{fill=yellow!20!white, draw=orange, thick}]{$(3,2,3,1,2,3)$};
		\draw (4,1) node [fill=yellow!20!white, draw=orange, thick]{$(2,3,2,1,2,3)$};
		\draw (4,2) node [{fill=green!20!white}]{$(2,3,1,2,1,3)$};
		\draw (3,3) node [{fill=green!20!white}]{$(2,3,1,2,3,1)$};
		\draw (5,3) node [{fill=green!20!white}]{$(2,1,3,2,1,3)$};
		\draw (4,4) node [{fill=green!20!white}]{$(2,1,3,2,3,1)$};
		\draw (4,5) node [{fill=yellow!20!white, draw=orange, thick}]{$(2,1,2,3,2,1)$};
		\draw (2,6) node [{fill=yellow!20!white, draw=orange, thick}]{$(1,2,1,3,2,1)$};
		\draw (0,6) node [{fill=yellow!20!white, draw=orange, thick}]{$(1,2,3,1,2,1)$};
		\draw (-2,5) node [{fill=blue!15!white}]{$(1,2,3,2,1,2)$};
		\draw (-2,4) node [{fill=red!15!white}]{$(1,3,2,3,1,2)$};
		\draw (-1,3) node [{fill=red!15!white}]{$(3,1,2,3,1,2)$};
		\draw (-3,3) node [{fill=red!15!white}]{$(1,3,2,1,3,2)$};
		\draw (-2,2) node [{fill=red!15!white}]{$(3,1,2,1,3,2)$};
		\draw (-2,1) node [{fill=blue!15!white}]{$(3,2,1,2,3,2)$};
		\end{tikzpicture}
\caption{Reduced words for the longest element in $\mathfrak{S}_4$}\label{figure_reduced_words_for_w0}
\end{figure}

Let $B^- \subset G$ be the opposite Borel subgroup of $G$ and let $U^-$ be the unipontent radical of $B^-$. Define the unipotent cell $U^-_{w_0} \colonequals U^- \cap B \widetilde{w}_0 B$, where $\widetilde{w}_0 \in N_G(H)$ denotes a lift for the longest element $w_0 \in W$. When $G = \SL_n(\C)$, $\widetilde{w}_0^{(n)}$ is the permutation matrix representing $w_0^{(n)}$. 
Then we have the open embedding $U^- \hookrightarrow G/B$ and it induces an open embedding $U^-_{w_0} \hookrightarrow G/B$. 

The cluster structure on the double Bruhat cell $G^{w_0,e} \coloneqq B^- \cap B \widetilde{w}_0 B$ introduced in~\cite{BFZ05}
induces that on the unipotent cell $U^-_{w_0}$ (see, for instance,~\cite[Appendix~B]{FujitaOya}).
Let us consider a cluster pattern $\mathcal{S} = \{{\bm s}_t\}_{t \in \mathbb{T}}$ of $\mathbb{C}[U^-_{w_0}]$.
The second named author and Oya constructed a valuation~$v_{{\bm s}_t}$ for each seed ${\bm s}_t$, and moreover, they proved that the Newton--Okounkov polytopes of $G/B$ arising from the cluster pattern are related by \emph{tropicalized mutations}. We also notice that each reduced word ${\bm i}$ for the longest element associates to a seed ${\bm s}_{\bm i}$ of the cluster pattern ${\bf\mathcal{S}}$ (see~\cite{BFZ05}). 
\begin{theorem}[{\cite[Corollaries 5.7(2), 6.6(1), 6.30 and Theorem 6.8(1)]{FujitaOya}}]\label{thm_Fujita_Oya}
Let $\lambda$ be a dominant integral weight, $\mathcal{L}_\lambda$ the corresponding line bundle on $G/B$, and $t \in \mathbb{T}$. Then the following statements hold for a specific section $\tau_\lambda \in H^0(G/B, \mathcal{L}_\lambda)$.
\begin{enumerate}
\item The Newton--Okounkov body $\Delta(G/B, \mathcal{L}_{\lambda}, v_{\bm{s}_{\bm i}}, \tau_{\lambda})$ is unimodularly equivalent to the string polytope $\Delta_{\bm i}(\lambda)$.
\item The Newton--Okounkov body $\Delta(G/B, \mathcal{L}_{\lambda}, v_{\bm{s}_{t}}, \tau_{\lambda})$ is a rational convex polytope. 
\item If $\begin{tikzcd}[column sep = 1.5em]
t \arrow[r,dash, "k"] & t'
\end{tikzcd}$, then 
\[
\Delta(G/B, \mathcal{L}_{\lambda}, v_{{\bm s}_{t'}}, \tau_{\lambda}) 
= \mu_k^T(\Delta(G/B, \mathcal{L}_{\lambda}, v_{{\bm s}_t}, \tau_{\lambda})). 
\]
Here, $\mu_k^T \colon \R^{J} \to \R^J, (g_j)_{j \in J} \mapsto (g_j')_{j \in J}$ is defined by
\[
g_j' = \begin{cases}
g_j + [-\varepsilon_{k,j}^{(t)}]_+ g_k + \varepsilon_{k,j}^{(t)}[g_k]_+ & \text{ if } j \neq k, \\
-g_j &\text{ if }j = k
\end{cases}
\]
for $j \in J$. 
\item The string polytopes $\Delta_{\bm i}(\lambda)$ and the Nakashima--Zelevinsky polytopes $\widetilde{\Delta}_{\bm i}(\lambda)$ are related by tropicalized mutations up to unimodular transformations.
\end{enumerate}
\end{theorem}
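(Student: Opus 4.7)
The plan is to establish all four statements by working directly with the valuation $v_{{\bm s}_t}$ associated to each seed, which is constructed as the leading-exponent valuation of the Laurent expansion in the cluster variables $(A_{t;j})_{j \in J}$. Because $(A_{t;j})_{j \in J}$ forms a transcendence basis of $\C(U^-_{w_0}) = \C(G/B)$, any nonzero rational function has a unique such expansion, and $v_{{\bm s}_t}$ records its leading $\Z^J$-exponent with respect to a fixed monomial order; in particular $v_{{\bm s}_t}$ has one-dimensional leaves. The section $\tau_{\lambda}$ is chosen compatibly so that the quotient $\sigma/\tau_{\lambda}^k$ extends to a regular function on $U^-_{w_0}$ for every $\sigma \in H^0(G/B, \mathcal{L}_{\lambda}^{\otimes k})$, which makes the valuation well-defined on the Newton--Okounkov construction.

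For part~(1), the reduced-word seed ${\bm s}_{\bm i}$ has cluster variables realized as the generalized minors appearing in the Berenstein--Fomin--Zelevinsky construction; evaluating these minors on Lusztig's parameterization of $U^-_{w_0}$ expresses $v_{{\bm s}_{\bm i}}$ on sections in terms of Lusztig parameters, and the classical Berenstein--Zelevinsky transition map, which is a unimodular piecewise-linear bijection, identifies Lusztig parameters with string parameters, yielding the claimed unimodular equivalence with $\Delta_{\bm i}(\lambda)$. For part~(3), the main computation is to track leading exponents across the exchange relation
\[
A_k A_k' = \prod_{j \in J} A_j^{[\varepsilon_{k,j}^{(t)}]_+} + \prod_{j \in J} A_j^{[-\varepsilon_{k,j}^{(t)}]_+}.
\]
Substituting $A_k = (M_+ + M_-)/A_k'$ into the Laurent expansion in the $t$-cluster and extracting the leading exponent in the $t'$-cluster gives $g_k' = -g_k$ together with the piecewise-linear correction $g_j' = g_j + [-\varepsilon^{(t)}_{k,j}]_+ g_k + \varepsilon^{(t)}_{k,j}[g_k]_+$ for $j \neq k$. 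The hard part will be checking that no cancellation of leading terms occurs at any stage, so that the tropicalization is exact rather than only an inequality; this is where the one-dimensional-leaves property together with the Laurent phenomenon for cluster algebras is indispensable.

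Part~(2) is then a consequence of (1) and (3): the string polytope $\Delta_{\bm i}(\lambda)$ is already known to be a rational convex polytope, and every seed ${\bm s}_t$ in the cluster pattern is reachable from some ${\bm s}_{\bm i}$ by a finite chain of mutations, each of which acts by the rational piecewise-linear map $\mu_k^T$, so the resulting body remains a rational convex polytope. Finally, part~(4) is obtained by identifying, inside the same cluster pattern, the seeds whose Newton--Okounkov polytopes realize the Nakashima--Zelevinsky polytopes $\widetilde{\Delta}_{\bm i}(\lambda)$; these seeds arise from ${\bm s}_{\bm i}$ via an explicit, combinatorially specified sequence of mutations reflecting the duality between the two parameterizations of crystal bases. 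Combining this observation with (1) and (3) places $\Delta_{\bm i}(\lambda)$ and $\widetilde{\Delta}_{\bm i}(\lambda)$ in a common tropicalized-mutation orbit up to unimodular transformation, which is exactly the content of~(4).
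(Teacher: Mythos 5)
This statement is not proved in the paper at all: it is quoted verbatim from Fujita--Oya (their Corollaries 5.7(2), 6.6(1), 6.30 and Theorem 6.8(1)) and used as a black box, so there is no in-paper proof to compare your sketch against. Judged on its own terms, your outline does capture the general shape of the argument in the cited source --- the valuation is the leading-exponent valuation attached to the cluster of each seed, part (3) comes from tracking leading exponents through the exchange relation, and part (4) comes from locating the Nakashima--Zelevinsky polytopes at specific seeds reached by an explicit mutation sequence (the $\overleftarrow{\bm\mu}_{\bm i}$ of the paper's Remark~2.4). Your derivation of (2) from (1) and (3) is also legitimate, provided you say explicitly that the Newton--Okounkov body is convex by construction, since the image of a polytope under the piecewise-linear map $\mu_k^T$ is a priori only a finite union of rational polytopes and not obviously convex.

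Two points are genuinely problematic. First, in part (1) you argue via the Berenstein--Zelevinsky transition map between Lusztig and string parameters and call it a ``unimodular piecewise-linear bijection''; a piecewise-linear bijection does not induce a unimodular equivalence of polytopes, which by definition requires a single affine map in $\mathrm{GL}_N(\Z)\ltimes\Z^N$. The actual relation, which this paper uses explicitly in Section~\ref{section_NOBYs_flag4}, is $\Delta(G/B,\mathcal{L}_\lambda,v_{\bm{s}_{\bm i}},\tau_\lambda)=\Delta_{\bm i}(\lambda)M_{\bm i}$ for a genuinely linear unimodular matrix $M_{\bm i}$, and your route does not produce such a map. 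Second, the ``no cancellation of leading terms'' issue you flag in part (3) is not a technicality to be checked at the end; it is the mathematical core of the theorem. It does not follow from one-dimensional leaves plus the Laurent phenomenon alone: one needs a basis of each $H^0(G/B,\mathcal{L}_\lambda^{\otimes k})$ (in Fujita--Oya, one coming from the dual canonical basis) whose elements are pointed with respect to \emph{every} seed and whose pointed degrees transform by exactly the tropicalized mutation rule. Without that input your argument only yields an inequality between the two bodies, not the stated equality.
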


When $G = \SL_3(\C)$, the cluster structure on the unipotent cell has $2$ seeds. Indeed, both come from reduced words of the longest element $w_0^{(3)}$. We depict two quivers below.
\[
\begin{array}{cc}
\begin{tikzcd}[row sep = 0]
& 2 \\
1 \arrow[ur] && 3 \arrow[ll] 
\end{tikzcd} 
& \begin{tikzcd}[row sep = 0]
& 2 \arrow[ld] \\
1 \arrow[rr] && 3 
\end{tikzcd} 
\\
{\bm i}_0 = (1,2,1) & {\bm i}_1 = (2,1,2)
\end{array}
\]
They produce the string polytopes unimodularly equivalent to the Gelfand--Cetlin polytope (see, for instance, \cite[Theorem~A]{CKLP21}). The facet normal vectors of the polytope given by the seed ${\bm s}_{{\bm i}_0}$ are given by the column vectors of the following matrix.
\[
\begin{bmatrix}
1 & 0 & 0 & -1 & 0 & 0 \\
1 & 1 & 0 & 0 & -1 & 0 \\
0 & 0 & 1 & -1 & 0 & -1
\end{bmatrix}.
\]

When $G = \SL_4(\C)$, the cluster structure on the unipotent cell has $14$ seeds. Moreover, in this case, the exchange graph can be realized as the one-skeleton of a polytope, called a \emph{generalized associahedron} (see~\cite{CFZ02}). 
We depict the generalized associahedron in Figure~\ref{figure_associahedron_A3}. 
\begin{figure}
	\tdplotsetmaincoords{110}{-30}
	\begin{tikzpicture}%
	[tdplot_main_coords,
	scale= 2,
	scale=0.700000,
	back/.style={loosely dotted, thin},
	edge/.style={color=black, thick},
	facet/.style={fill=blue!95!black,fill opacity=0.100000},
	vertex/.style={inner sep=1pt,circle,fill=black,thick,anchor=base},
	gvertex/.style={inner sep=1.2pt,circle,draw=green!25!black,fill=green!75!black,thick,anchor=base}]
	\coordinate (1) at (-0.50000, -1.50000, 2.00000);
	\coordinate (2) at (1.50000, 1.50000, -2.00000);
	\coordinate (3) at (0.50000, 0.50000, 1.00000);
	\coordinate (4) at (0.50000, 1.50000, 0.00000);
	\coordinate (5) at (1.50000, 1.50000, -1.00000);
	\coordinate (6) at (-0.50000, -0.50000, 2.00000);
	\coordinate (7) at (1.50000, 0.50000, 0.00000);
	\coordinate (8) at (1.50000, -1.50000, 0.00000);
	\coordinate (9) at (1.50000, -1.50000, -2.00000);
	\coordinate (10) at (-1.50000, -1.50000, -2.00000);
	\coordinate (11) at (-1.50000, -1.50000, 2.00000);
	\coordinate (12) at (-1.50000, 1.50000, -2.00000);
	\coordinate (13) at (-1.50000, 1.50000, 0.00000);
	\coordinate (14) at (-1.50000, -0.50000, 2.00000);
	
	
	\draw[edge,back] (9) -- (10);
	\draw[edge,back] (10) -- (11);
	\draw[edge,back] (10) -- (12);
	\node[vertex] at (10)     {};
	
%
	
	\draw[edge] (1) -- (6);
	\draw[edge] (1) -- (8);
	\draw[edge] (1) -- (11);
	\draw[edge] (2) -- (5);
	\draw[edge] (2) -- (9);
	\draw[edge] (2) -- (12);
	\draw[edge] (3) -- (4);
	\draw[edge] (3) -- (6);
	\draw[edge] (3) -- (7);
	\draw[edge] (4) -- (5);
	\draw[edge] (4) -- (13);
	\draw[edge] (5) -- (7);
	\draw[edge] (6) -- (14);
	\draw[edge] (7) -- (8);
	\draw[edge] (8) -- (9);
	\draw[edge] (11) -- (14);
	\draw[edge] (12) -- (13);
	\draw[edge] (13) -- (14);
	\node[vertex] at (1)     {};
	\node[vertex] at (2)     {};
	\node[vertex] at (3)     {};
	\node[vertex] at (4)     {};
	\node[vertex] at (5)     {};
	\node[vertex] at (6)     {};
	\node[vertex] at (7)     {};
	\node[vertex] at (8)     {};
	\node[vertex] at (9)     {};
	\node[vertex] at (11)     {};
	\node[vertex] at (12)     {};
	\node[vertex] at (13)     {};
	\node[vertex] at (14)     {};
	

\draw[blue, very thick] (14)--(6)--(1)--(8)--(9)--(2)--(12)--(13)--(14);


	\foreach \y/\z in {13/0, 12/8, 9/12, 8/13}{
	\node[fill=yellow!20, draw = black, thick, minimum size= 0.5cm, inner sep = 0pt, circle] at (\y) {$\z$};
}

	\foreach \y in {2}{
	\node 	[fill=green!20, draw=black, thick,  minimum size= 0.5cm, inner sep = 0pt, circle] at (\y) {$11$};
}

\foreach \y in {6}{
\node [fill=red!15,  draw=black, thick, minimum size= 0.5cm, inner sep = 0pt, circle] at (\y) {$9$};
}

\foreach \y/\z in {14/7, 1/10}{
\node [fill=blue!15,  draw=black, thick,  minimum size= 0.5cm, inner sep = 0pt, circle] at (\y) {$\z$};
}

\foreach \y/\z in {3/2, 4/1, 5/4, 7/3, 10/6, 11/5}{
\node [fill=white,  draw=black, thick,  minimum size= 0.5cm, inner sep = 0pt, circle] at (\y) {$\z$};
 }

	\end{tikzpicture}
\caption{The one-skeleton of the generalized associahedron is the exchange graph. }\label{figure_associahedron_A3}
\end{figure}
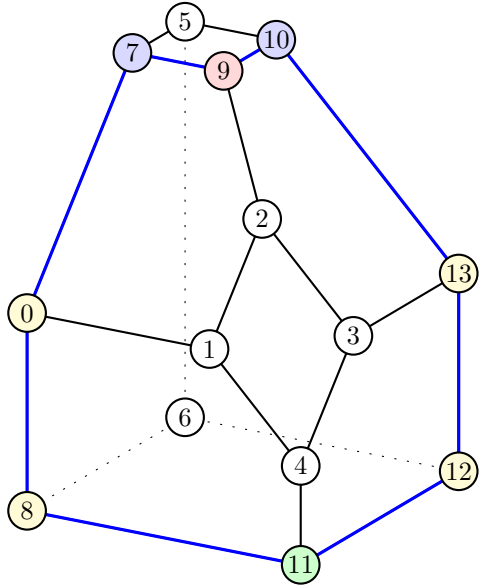
Moreover, we draw the quivers in the diagram on page~\pageref{land_scape_page}. 
Among $14$ seeds, only $8$ seeds come from the reduced words of the longest element. Those seeds are labeled with numbers $0,7,8,\dots,13$. The corresponding reduced words are listed in the first and second columns of Table~\ref{table_seed_reduced_words}. Here, we note that two reduced words produce the same seed if they are related via a sequence of $2$-moves and we write in the table only one reduced word among reduced words related to sequences of $2$-moves.
\begin{remark}
By Theorem~\ref{thm_Fujita_Oya}(4), the string polytopes $\Delta_{\bm i}(\lambda)$ and the Nakashima--Zelevinsky polytopes $\widetilde{\Delta}_{\bm i}(\lambda)$ are related by tropicalized  mutations. For each reduced word $\bm i$ of the longest element $w_0^{(4)}$, applying the mutation sequence $\overleftarrow{\bm \mu}_{\bm i}$ given in~\cite[Section 6.2.1]{FujitaOya}, we associate each Nakashima--Zelevinsky polytope $\widetilde{\Delta}_{\bm i}(\lambda)$ to a seed ${\bm s}_{t}$ such that the Newton--Okounkov polytope $\Delta(G/B, \mathcal{L}_{\lambda}, v_{{\bm s}_{t}}, \tau_{\lambda})$ is unimodularly equivalent to $\widetilde{\Delta}_{\bm i}(\lambda)$ in the third column of Table~\ref{table_seed_reduced_words}. 
\end{remark}
\begin{table}[b]
\begin{tabular}{c|| c | c}
\toprule 
reduced words & seed ${\bm s}_t  = {\bm s}_{\bm i}$ & seed related to NZ polytopes\\
\midrule 
$(1,2,1,3,2,1)$ & $0$ & $13$\\
$(1,2,3,2,1,2)$ & $7$ &  $3$\\
$(1,3,2,3,1,2)$  & $9$ & $4$\\
$(2,1,2,3,2,1)$ & $8$ & $10$\\
$(2,1,3,2,3,1)$ & $11$ & $5$ \\
$(2,3,2,1,2,3)$ & $12$ & $7$\\
$(3,2,1,2,3,2)$ & $10$ & $1$\\
$(3,2,1,3,2,3)$ & $13$ & $0$\\
\bottomrule 
\end{tabular}
\caption{Seeds come from reduced words and related to Nakashima--Zelevinsky polytopes. 
The second column represents the seeds come from reduced words, that is, ${\bm s}_{t} = {\bm s}_{\bm i}$. The third column represents the seeds related to Nakashima--Zelevinsky polytopes, that is,  $\widetilde{\Delta}_{\bm i}(\lambda) \cong \Delta(G/B, \mathcal{L}_{\lambda}, v_{{\bm s}_{t}}, \tau_{\lambda})$.
}\label{table_seed_reduced_words}
\end{table}

Since there is a systematic way to describe the string polytopes using the combinatorics of reduced words \cite{GP00}, we describe the remaining six Newton--Okounkov polytopes $\Delta(G/B, \mathcal{L}_{\lambda}, v_{{\bm s}_t}, \tau_{\lambda})$ for $t = t_1,\dots,t_6$ using the tropicalized mutations described in Theorem~\ref{thm_Fujita_Oya}(3). Here, $t_i$ denotes an element of $\mathbb{T}$ giving the seed labeled with the number $i$.
For simplicity, we set $v_{t} = v_{\bm s_{t}}$. 
Throughout this paper, we set $\lambda = 2 \varpi_1 + \cdots + 2 \varpi_{n-1}$, which defines the anticanonical line bundle $\mathcal{L}_{\lambda}$ of $\flag(\C^n)$. Here,  $\varpi_1,\dots,\varpi_{n-1}$ are fundamental weights. 
It is proved in~\cite[Theorem~3.8]{AB04} that if $\Delta = \Delta(\flag(\C^4), \mathcal{L}_{\lambda}, v_{t}, \tau_{\lambda})$ is a lattice polytope, then it is reflexive. Since the Newton--Okounkov bodies that we considered are all lattice polytopes, each polytope~$\Delta$ can be represented in the following form (after an appropriate translation): 
\[
\Delta = \{ \mathbf g \in \R^6 \mid \langle \mathbf g, \mathbf u_F \rangle \geq -1 \quad \text{ for all facets }F \}.
\]
We present a matrix whose column vectors are the facet normal vectors $\mathbf{u}_F$ for each case. 

We start with the seed $\mathbf{s}_{t_0}$ coming from the reduced word $\bm i_0 = (1,2,1,3,2,1)$. The corresponding quiver corresponding to $\varepsilon^{{\bm i}_0}$ is given as follows:
\begin{equation}\label{eq_quiver_0}
\begin{tikzcd}[row sep = 0]
&& 4 \\
& 2 \arrow[ur] \arrow[dr] && 5 \arrow[ll] \\
1 \arrow[ur] && 3 \arrow[ll] \arrow[ur] && 6 \arrow[ll]
\end{tikzcd}
\end{equation}
Here, $J = \{1,\dots,6\}$ and $J_{\text{uf}} = \{1,2,3\}$. 
In this case, the facet normal vectors of the polytope $\Delta(\flag(\C^4), \mathcal{L}_{\lambda}, v_{t_0}, \tau_{\lambda})$ are the column vectors of the following matrix. 
\begin{equation}\label{eq_normal_vectors_121321}
\begin{bmatrix}
1 & 0 & 0 & 0 & 0 & 0 & -1 & 0 & 0 & 0 & 0 & 0 \\
1 & 1 & 0 & 0 & 0 & 0 & 0 & -1 & 0 & 0 & 0 & 0 \\
0 & 0 & 1 & 0 & 0 & 0 & -1 & 0 & -1 & 0 & 0 & 0 \\
1 & 1 & 0 & 1 & 0 & 0 & 0 & 0 & 0 & -1 & 0 & 0 \\
0 & 0 & 1 & 0 & 1 & 0 & 0 & -1 & 0 & 0 & -1 & 0 \\
0 & 0 & 0 & 0 & 0 & 1 & -1 & 0 & -1 & 0 & 0 & -1
\end{bmatrix}. 
\end{equation}

To obtain the above vectors, we consider the relation between the string polytope $\Delta_{{\bm i}_0}(\lambda)$ and the Newton--Okounkov polytope $\Delta(\flag(\C^4), \mathcal{L}_{\lambda}, v_{t_0}, \tau_{\lambda})$. Using the description in~\cite[Section~5]{Littelmann98} (or  Gleizer--Postnikov's rigorous paths in~\cite{GP00}), for $\lambda = \lambda_1 \varpi_1 + \lambda_2 \varpi_2 + \lambda_3 \varpi_3$, the string polytope~$\Delta_{{\bm i}_0}(\lambda)$ is given by the set of points $(a_1,\dots,a_6) \in \mathbb{R}^6_{\geq 0}$ satisfying the following inequalities:
\[
\begin{array}{rcccl}
	0 & \leq & a_1 & \leq & a_2 - 2a_3 + a_5 - 2a_6 + \lambda_1, \\
	a_3 & \leq & a_2 & \leq & a_3 + a_4 - 2a_5 + a_6 + \lambda_2, \\
	0 & \leq & a_3 & \leq & a_5 - 2a_6 + \lambda_1, \\
	a_5 & \leq & a_4 & \leq & a_5 + \lambda_3, \\
	a_6 & \leq & a_5 & \leq & a_6 + \lambda_2, \\
	0 & \leq & a_6 & \leq & \lambda_1.
\end{array}
\]
On the other hand, by~\cite[Corollary~6.6]{FujitaOya} (also, see~\cite[Theorem~2.14(3)]{FujitaOya}), we obtain $\Delta(\flag(\C^4), \mathcal{L}_{\lambda}, v_{t_0}, \tau_{\lambda}) = \Delta_{{\bm i}_0}(\lambda) M_{{\bm i}_0}$, where $M_{{\bm i}_0}$ is given by 
\[
M_{{\bm i}_0} = 
\begin{bmatrix}
	1 & 0 & 0 & 0 & 0 & 0\\
	1 & 1 & 0 & 0 & 0 & 0\\
	0 & 1 & 1 & 0 & 0 & 0\\
	1 & 1 & 0 & 1 & 0 & 0 \\
	0 & 1 & 1 & 1 & 1 & 0 \\
	0 & 0 & 0 & 1 & 1 & 1
\end{bmatrix}.
\]
Accordingly, by computing 
\[
\begin{bmatrix}
	1 & 0 & 0 & 0 & 0 & 0\\
	1 & 1 & 0 & 0 & 0 & 0\\
	0 & 1 & 1 & 0 & 0 & 0\\
	1 & 1 & 0 & 1 & 0 & 0 \\
	0 & 1 & 1 & 1 & 1 & 0 \\
	0 & 0 & 0 & 1 & 1 & 1
\end{bmatrix}
\begin{bmatrix}
	1 & 0 & 0 & 0 & 0 & 0 & -1 & 0 & 0 & 0 & 0 & 0 \\
	0 & 1 & 0 & 0 & 0 & 0 & 1 & -1 & 0 & 0 & 0 & 0 \\
	0 & -1 & 1 & 0 & 0 & 0 & -2 & 1 & -1 & 0 & 0 & 0 \\
	0 & 0 & 0 & 1 & 0 & 0 & 0 & 1 & 0 & -1 & 0 & 0 \\
	0 & 0 & 0 & -1 & 1 & 0 & 1 & -2 & 1 & 1 & -1 & 0 \\
	0 & 0 & 0 & 0 & -1 & 1 & -2 & 1 & -2 & 0 & 1 & -1
\end{bmatrix},
\]
we obtain~\eqref{eq_normal_vectors_121321}.

For $t=t_1,\dots,t_6$, we compute the defining inequalities of the Newton--Okounkov body $\Delta(\flag(\C^4), \mathcal{L}_{\lambda}, v_{t}, \tau_{\lambda})$ applying Theorem~\ref{thm_Fujita_Oya}(3) repeatedly using the computer program SAGE. See Algorithm~\ref{algorithm}. 
We provide more explanation on this. To obtain the image under the map $\mu_k^T$ of a polytope $P$, 
we first divide $P$ into two pieces via the coordinate hyperplane $g_k = 0$. Indeed, we obtain a polytope $P_{+}$ lives in $g_k \geq 0$; a polytope $P_{-}$ lives in $g_k \leq 0$ such that $P = P_{+} \cup P_-$. 
The tropicalized mutation $\mu_k^T$ becomes a linear map when $g_k \geq 0$ or $g_k \leq 0$. We denote two linear maps by $T_+$ and $T_-$, respectively. To obtain the images $\mu_k^T(P_+)$ and $\mu_k^T(P_-)$, it is enough to consider images $T_+(V(P_+))$ and $T_-(V(P_-))$ of the vertices of $P_+$ and $P_-$ under linear maps $T_+$ and $T_-$. 
By taking the convex hull of $T_+(V(P_+)) \cup T_-(V(P_-))$, we obtain $\mu_k^T(P)$. 
 
\begin{algorithm}[!ht]
\DontPrintSemicolon
  
  \KwInput{Polytope $P$, coordinate index $k$ of the hyperplane $g_{k}=0$, two transformations $T_+$ and $T_-$ defining the tropicalized mutation}
  \KwOutput{Polytope after applying the tropicalized mutation}

  $P_{+} =$ polytope whose defining inequalities are those of $P$ and $g_{k} \geq 0$;

  $P_{-} =$ polytope whose defining inequalities are those of $P$ and $g_{k} \leq 0$; 

  $V = \emptyset$;
  \tcc{V will be used to produce the new polytope. }

  \For{each vertex $v$ of $P_{+}$}
    {
        $V \leftarrow V \cup T_+(v)$; 
    }
  \For{each vertex $v$ of $P_{-}$}
	{
		$V \leftarrow V \cup T_-(v)$;
	}

  \Return{Polytope which is the convex hull of $V$};
\caption{Applying tropicalized mutation}\label{algorithm}
\end{algorithm}

\medskip 
\textsf{Case 1. $t = t_1$. }
We note that 
$\begin{tikzcd}[column sep = 1.5em]
t_0 \arrow[r,dash, "2"] & t_1
\end{tikzcd}$.
Applying Theorem~\ref{thm_Fujita_Oya}(3) to the quiver~\eqref{eq_quiver_0}, we obtain the map $\mu_2^T$ as follows:
\[
\mu_2^T(g_1,\dots,g_6) = (g_1 + [g_2]_+, -g_2, g_3 - [-g_2]_+, g_4 - [-g_2]_+, g_5 +[g_2]_+, g_6).
\]
Using Algorithm~\ref{algorithm}, we obtain the facet normal vectors of the polytope $\Delta(G/B, \mathcal{L}_{\lambda}, v_{t_1}, \tau_{\lambda})$ as the column vectors of the following matrix. 
\[
\begin{bmatrix}
1 &  0 &  0 &  0 &  0 &  0 &  -1 &  0 &  0 &  0 &  0 &  0 &  0 \\
0 &  1 &  1 &  0 &  0 &  0 &  -1 &  -1 &  -1 &  0 &  0 &  0 &  0\\
0 &  0 &  1 &  0 &  0 &  0 &  -1 &  -1 &  0 &  -1 &  0 &  0 &  0\\
1 &  0 &  0 &  1 &  0 &  0 &  0 &  0 &  -1 &  0 &  -1 &  0 &  0\\
0 &  1 &  1 &  0 &  1 &  0 &  0 &  0 &  0 &  0 &  0 &  -1 &  0\\
0 &  0 &  0 &  0 &  0 &  1 &  -1 &  -1 &  0 &  -1 &  0 &  0 &  -1
\end{bmatrix}.
\]
We note that the quiver corresponding to $t_1$ is given as follows:
\begin{equation}\label{eq_quiver_1}
\begin{tikzcd}[row sep = 0]
&& 4 \arrow[dl] \\
& 2 \arrow[dl] \arrow[rr] && 5 \\
1 \arrow[rruu, bend left] && 3 \arrow[ul] && 6 \arrow[ll]
\end{tikzcd}
\end{equation}

\medskip 

\textsf{Case 2. $t = t_2$. } We note that $\begin{tikzcd}[column sep = 1.5em]
t_1 \arrow[r,dash, "3"] & t_2
\end{tikzcd}$.
Applying Theorem~\ref{thm_Fujita_Oya}(3) to the quiver~\eqref{eq_quiver_1}, we obtain the map $\mu_3^T$ as follows:
\[
\mu_3^T(g_1,\dots,g_6) = 
(g_1, g_2 -[-g_3]_+, -g_3, g_4, g_5, [g_3]_+ + g_6).
\]
Applying Algorithm~\ref{algorithm}, we obtain the facet normal vectors of the polytope $\Delta(G/B, \mathcal{L}_{\lambda}, v_{t_2}, \tau_{\lambda})$ as the column vectors of the following matrix. 
\[
\begin{bmatrix}
1 &  0 &  0 &  0 &  0 &  0 &  -1 &  0 &  0 &  0 &  0 &  0 &  0\\
0 &  1 &  0 &  0 &  0 &  0 &  -1 &  -1 &  -1 &  -1 &  0 &  0 &  0\\
0 &  0 &  1 &  0 &  0 &  0 &  0 &  -1 &  0 &  0 &  0 &  0 &  0\\
1 &  0 &  0 &  1 &  0 &  0 &  0 &  -1 &  0 &  -1 &  -1 &  0 &  0\\
0 &  1 &  0 &  0 &  1 &  0 &  0 &  0 &  0 &  0 &  0 &  -1 &  0\\
0 &  0 &  1 &  0 &  0 &  1 &  -1 &  0 &  -1 &  0 &  0 &  0 &  -1
\end{bmatrix}.
\]
We note that the quiver corresponding to $t_2$ is given as follows:
\begin{equation}\label{eq_quiver_2}
\begin{tikzcd}[row sep = 0]
&& 4 \arrow[dl] \\
& 2 \arrow[dl] \arrow[rr]\arrow[rd] && 5 \\
1 \arrow[rruu, bend left] && 3 \arrow[rr] && 6 \arrow[lllu]
\end{tikzcd}
\end{equation}

\medskip 
\textsf{Case 3. $t = t_3$. }
We note that $\begin{tikzcd}[column sep = 1.5em]
t_2 \arrow[r,dash, "1"] & t_3
\end{tikzcd}$.
Applying Theorem~\ref{thm_Fujita_Oya}(3) to the quiver~\eqref{eq_quiver_2}, we obtain the map $\mu_1^T$ as follows:
\[
\mu_1^T(g_1,\dots,g_6) = 
(-g_1, [g_1]_+ + g_2, g_3, - [-g_1]_+ + g_4, g_5, g_6).
\]
Applying Algorithm~\ref{algorithm}, we obtain the facet normal vectors of the polytope $\Delta(G/B, \mathcal{L}_{\lambda}, v_{t_3}, \tau_{\lambda})$ as the column vectors of the following matrix.  
\[
\begin{bmatrix}
1 &  0 &  0 &  0 &  0 &  0 &  -1 &  -1 &  -1 &  0 &  0 &  0 &  0\\
1 &  1 &  0 &  0 &  0 &  0 &  0 &  -1 &  -1 &  -1 &  0 &  0 &  0\\
0 &  0 &  1 &  0 &  0 &  0 &  0 &  0 &  -1 &  0 &  0 &  0 &  0\\
0 &  0 &  0 &  1 &  0 &  0 &  -1 &  -1 &  -1 &  0 &  -1 &  0 &  0\\
1 &  1 &  0 &  0 &  1 &  0 &  0 &  0 &  0 &  0 &  0 &  -1 &  0\\
0 &  0 &  1 &  0 &  0 &  1 &  0 &  0 &  0 &  -1 &  0 &  0 &  -1
\end{bmatrix}.
\]

\medskip 
\textsf{Case 4. $t = t_4$. }
We note that $\begin{tikzcd}[column sep = 1.5em]
t_1 \arrow[r,dash, "1"] & t_4
\end{tikzcd}$.
Applying Theorem~\ref{thm_Fujita_Oya}(3) to the quiver~\eqref{eq_quiver_1}, we obtain the map $\mu_1^T$ as follows:
\[
\mu_1^T(g_1,\dots,g_6) = 
(-g_1, [g_1]_+ + g_2, g_3, -[-g_1]_+ + g_4, g_5, g_6).
\]
Applying Algorithm~\ref{algorithm}, we obtain the facet normal vectors of the polytope $\Delta(G/B, \mathcal{L}_{\lambda}, v_{t_4}, \tau_{\lambda})$ as the column vectors of the following matrix. 

\[
\begin{bmatrix}
1 &  1 &  0 &  0 &  0 &  0 &  0 &  -1 &  -1 &  0 &  0 &  0 &  0 &  0\\
1 &  1 &  1 &  1 &  0 &  0 &  0 &  0 &  -1 &  -1 &  0 &  0 &  0 &  0\\
1 &  0 &  1 &  0 &  0 &  0 &  0 &  0 &  0 &  -1 &  -1 &  0 &  0 &  0\\
0 &  0 &  0 &  0 &  1 &  0 &  0 &  -1 &  -1 &  0 &  0 &  -1 &  0 &  0\\
1 &  1 &  1 &  1 &  0 &  1 &  0 &  0 &  0 &  0 &  0 &  0 &  -1 &  0\\
0 &  0 &  0 &  0 &  0 &  0 &  1 &  0 &  0 &  -1 &  -1 &  0 &  0 &  -1
\end{bmatrix}.
\]

\medskip 

\textsf{Case 5. $t = t_5$. }
We note that $\begin{tikzcd}[column sep = 1.5em]
t_0 \arrow[r,dash, "3"] & t_7 \arrow[r, dash, "1"] & t_5
\end{tikzcd}$ and the quiver corresponding to $t_7$ is given by 
\begin{equation}\label{eq_quiver_7}
\begin{tikzcd}[row sep = 0]
&& 4 \\
& 2 \arrow[ur]   && 5 \arrow[ld] \\
1 \arrow[rr] && 3 \arrow[ul] \arrow[rr]  && 6 \arrow[llll, bend left]
\end{tikzcd}
\end{equation}
Applying Theorem~\ref{thm_Fujita_Oya}(3) to the quivers~\eqref{eq_quiver_0} and~\eqref{eq_quiver_7}, we obtain the facet normal vectors of the polytope $\Delta(G/B, \mathcal{L}_{\lambda}, v_{t_5}, \tau_{\lambda})$ as the column vectors of the following matrix. 
\[
\begin{bmatrix}
1 &  1 &  0 &  0 &  0 &  0 &  0 &  -1 &  -1 &  0 &  0 &  0 &  0 &  0\\
0 &  0 &  1 &  1 &  0 &  0 &  0 &  -1 &  0 &  -1 &  0 &  0 &  0 &  0\\
0 &  1 &  1 &  0 &  0 &  0 &  0 &  -1 &  -1 &  -1 &  -1 &  0 &  0 &  0\\
0 &  0 &  1 &  1 &  1 &  0 &  0 &  0 &  0 &  0 &  0 &  -1 &  0 &  0\\
0 &  0 &  0 &  0 &  0 &  1 &  0 &  -1 &  -1 &  -1 &  -1 &  0 &  -1 &  0\\
1 &  1 &  0 &  0 &  0 &  0 &  1 &  0 &  0 &  0 &  0 &  0 &  0 &  -1
\end{bmatrix}.
\]

\medskip 
\textsf{Case 6. $t = t_6$. } 
We note that $\begin{tikzcd}[column sep = 1.5em]
t_0 \arrow[r,dash, "1"] & t_8 \arrow[r, dash, "3"] & t_6
\end{tikzcd}$ and the quiver corresponding to $t_8$ is given by
\begin{equation}\label{eq_quiver_8}
\begin{tikzcd}[row sep = 0]
&& 4   \\
& 2 \arrow[dl] \arrow[ur]  && 5 \arrow[ll] \\
1 \arrow[rr] && 3 \arrow[ru] && 6 \arrow[ll]
\end{tikzcd}
\end{equation}
Applying Theorem~\ref{thm_Fujita_Oya}(3) to the quivers~\eqref{eq_quiver_0} and~\eqref{eq_quiver_8}, we obtain the facet normal vectors of the polytope $\Delta(G/B, \mathcal{L}_{\lambda}, v_{t_2}, \tau_{\lambda})$ as the column vectors of the following matrix. 

\[
\begin{bmatrix}
1 &  0 &  0 &  0 &  0 &  0 &  -1 &  0 &  0 &  0 &  0 &  0 &  0\\
0 &  1 &  0 &  0 &  0 &  0 &  -1 &  -1 &  -1 &  0 &  0 &  0 &  0\\
0 &  0 &  1 &  0 &  0 &  0 &  -1 &  0 &  -1 &  -1 &  0 &  0 &  0\\
0 &  1 &  0 &  1 &  0 &  0 &  0 &  0 &  0 &  0 &  -1 &  0 &  0\\
1 &  0 &  0 &  0 &  1 &  0 &  -1 &  -1 &  -1 &  -1 &  0 &  -1 &  0\\
0 &  0 &  1 &  0 &  0 &  1 &  0 &  0 &  0 &  0 &  0 &  0 &  -1
\end{bmatrix}.
\]

\begin{remark}
We notice that the Newton--Okounkov polytopes  $\Delta(\flag(\C^4), \mathcal{L}_{\lambda},v_t, \tau_{\lambda})$ are all lattice polytopes.
\end{remark}
\section{Unimodular equivalence classes}\label{sec_unimodular}
In this section, we consider the unimodular equivalence classes among Newton--Okounkov polytopes $\Delta(\flag(\C^4), \mathcal{L}_{\lambda},v_t, \tau_{\lambda})$ arising from the cluster structure. 

\begin{proposition}\label{prop_unimodular}
Among $14$ seeds in the cluster structure for $\flag(\C^4)$, there are $5$ polytopes up to unimodular equivalence as in Table~\ref{table_equivalence_f_vector}.
\end{proposition}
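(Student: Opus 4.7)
The plan is to combine the symmetry reduction from the exchange graph with a combinatorial invariant calculation. By Theorem~\ref{thm_involutions} the two involutions $\iota,\iota'$ act on the $14$ vertices of the exchange graph (Figure~\ref{figure_associahedron_A3}) and preserve the unimodular equivalence class of the attached Newton--Okounkov polytope, so the induced $(\Z_2\times\Z_2)$-action collapses the $14$ seeds to at most $6$ equivalence classes. Concretely, I would tabulate the orbits by tracing $\iota$ and $\iota'$ on the labels in Figure~\ref{figure_associahedron_A3}, and choose one representative seed per orbit; without loss of generality the representatives can be taken among the seven seeds $t_0,\dots,t_6$ for which the facet descriptions were already produced in Section~\ref{section_NOBYs_flag4}, possibly augmented by one of the string-polytope seeds $t_7,\dots,t_{13}$ whose facets are obtained from the reduced-word description of~\cite{GP00} (or, equivalently, from Theorem~\ref{thm_Fujita_Oya}(3) via Algorithm~\ref{algorithm}).

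Having fixed these $6$ representatives, the next step is to extract a unimodular invariant—the $f$-vector—from each of them. This is a finite face-lattice computation starting from the explicit matrices of facet normals given in Section~\ref{section_NOBYs_flag4}, and can be carried out in SAGE. I expect the resulting $f$-vectors to separate the $6$ representatives into $5$ groups: any two representatives with distinct $f$-vectors are automatically not unimodularly equivalent, which already yields the lower bound of $5$ equivalence classes recorded in Table~\ref{table_equivalence_f_vector}.

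What remains is to exhibit an explicit unimodular equivalence between the two orbits that share an $f$-vector. For this I would search for a pair $(A,\mathbf{b})\in \mathrm{GL}_6(\Z)\times \Z^6$ such that the affine map $\mathbf x\mapsto A\mathbf x+\mathbf b$ sends the facet-normal matrix of the first representative (suitably translated so that both polytopes are in the origin-centered reflexive form used in Section~\ref{section_NOBYs_flag4}) to a column permutation of the facet-normal matrix of the second. Since both polytopes are reflexive, $\mathbf b$ can be taken to be zero and the search reduces to solving an integer linear system between the dual vertex sets; the $f$-vector match ensures that any bijection of facets produced this way will respect incidences. Once a candidate $A$ is found, one verifies $\det A=\pm 1$ and checks that it realizes a bijection on the vertex sets of the two polytopes.

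The main obstacle is this last construction. The $f$-vector alone does not guarantee unimodular equivalence—there exist pairs of reflexive polytopes in dimension $6$ with the same $f$-vector that are not $\mathrm{GL}_6(\Z)$-equivalent—so producing the explicit $A$ is what pins the count at exactly $5$. Everything else (enumerating the $(\Z_2\times\Z_2)$-orbits, computing $f$-vectors from the matrices in Section~\ref{section_NOBYs_flag4}, and checking that distinct $f$-vectors give distinct classes) is a routine finite calculation given the data already assembled.
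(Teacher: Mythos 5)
There are two genuine gaps. First, your reduction to at most $6$ classes invokes Theorem~\ref{thm_involutions}, but in the paper that theorem is deduced \emph{from} Proposition~\ref{prop_unimodular}: the involutions $\iota,\iota'$ are read off the exchange graph only \emph{after} the classification is known (only $\iota$, coming from the Dynkin-diagram involution, has an a priori justification; $\iota'$ does not). Using the theorem at this point is circular unless you first establish both involutions independently, which you do not do. The paper instead works directly with the $14$ explicit facet-normal matrices: it sorts them into combinatorial isomorphism classes with SAGE and then exhibits an explicit unimodular (indeed, signed coordinate-permutation) map inside each class, quoting \cite{CKLP21} for the four reduced-word seeds $0,8,12,13$ of Gelfand--Cetlin type.

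Second, and more seriously, the $f$-vector does not do the separating work you assign to it. Cases~2 and~5 of Table~\ref{table_equivalence_f_vector} (seeds $1,3,7,10$ and $6,11$) have the identical $f$-vector $(1,42,141,202,153,63,13,1)$ yet lie in \emph{different} unimodular classes; the paper must distinguish them by the distribution of vertex degrees (Table~\ref{table_degrees_Case25}). So among your $6$ orbit representatives the $f$-vector yields only $4$ groups, not $5$, and your lower bound argument gives $4$ classes at best. Besides merging the one pair you intend to merge by an explicit $A\in\mathrm{GL}_6(\Z)$, you would also have to \emph{prove non-equivalence} for the pair of representatives of Cases~2 and~5, which requires a finer invariant than the $f$-vector (combinatorial isomorphism type suffices, and is what the paper uses). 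Your closing caveat correctly notes that equal $f$-vectors do not imply equivalence, but you deploy it only in the direction of constructing a map, not in the direction of ruling one out where the data actually demand it.
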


In Table~\ref{table_equivalence_f_vector}, we provide the seeds providing the polytopes in the same equivalence classes and their $f$-vectors. 
\begin{table}[b]
\begin{tabular}{c|c|c}
\toprule 
Case No. & seeds & $f$-vectors \\
\midrule 
Case 1 & $0, 8, 12, 13$ & $(1, 40, 132, 186, 139, 57, 12, 1)$ \\
Case 2 & $1,3,7,10$ & $(1, 42, 141, 202, 153, 63, 13, 1)$ \\
Case 3 & $2,9$ & $(1, 38, 133, 197, 152, 63, 13, 1)$ \\
Case 4 & $4,5$ & $(1, 43, 146, 212, 163, 68, 14, 1)$ \\
Case 5 & $6,11$ & $(1, 42, 141, 202, 153, 63, 13, 1)$ \\
\bottomrule 
\end{tabular}
\caption{Equivalence classes and $f$-vectors}\label{table_equivalence_f_vector}
\end{table} 
We notice that the polytopes in Cases 2 and 5 have the same $f$-vector. However, they produce combinatorially different polytopes. Considering the degree, which is number of edges emanating from each vertex, we obtain Table~\ref{table_degrees_Case25} and we can see that the numbers of vertices having a given degree are different. 
\begin{table}
\begin{tabular}{c|rr}
	\toprule 
	degree &  Case 2 &  Case 5 \\
	\midrule 
	6 & 20 & 22\\
	7 & 15 & 10\\
	8 & 4 & 8\\
	9 & 3 & 2 \\
	\bottomrule 
\end{tabular}
\caption{Numbers of vertices of polytopes in Cases 2 and 5  having a given degree}
\label{table_degrees_Case25}
\end{table}

Using Proposition~\ref{prop_unimodular}, we obtain the following observation, which is the main theorem of this paper. 
\begin{theorem}\label{thm_involutions}
There exist two involutions $\iota$ and $\iota'$ on the exchange graph for $\flag(\mathbb{C}^4)$ such that 
\[
\Delta_{\bm s} \cong \Delta_{\iota(\bm s)} \quad \text{ and } \quad 
\Delta_{\bm s} \cong \Delta_{\iota'(\bm s)}. 
\]
Here, $\Delta_{\bm s} = \Delta(\flag(\C^4), \mathcal{L}_{\lambda}, v_{\bm s}, \tau_{\lambda})$ is the Newton--Okounkov body arising from the cluster structure for each seed $\bm s$ and $\mathcal{L}_{\lambda}$ is the anticanonical line bundle of $\flag(\C^4)$. 
Moreover, there is no other symmetries on the exchange graph preserving the unimodular equivalence classes.  
\end{theorem}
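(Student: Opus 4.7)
The plan is to exhibit two commuting involutions on the exchange graph and then to verify the claims by combining Proposition~\ref{prop_unimodular} with a small finite analysis. For $\iota$, I take the graph automorphism induced by the Dynkin diagram involution of type $A_3$, sending the seed of a reduced word $(i_1,\dots,i_6)$ to the seed of $(4-i_1,\dots,4-i_6)$. From Table~\ref{table_seed_reduced_words} this exchanges the pairs $\{0,13\},\ \{7,10\},\ \{8,12\}$ and fixes $\bm s_9$ and $\bm s_{11}$, and its action on the remaining six seeds $\bm s_1,\dots,\bm s_6$ is uniquely determined by the graph-automorphism requirement. For $\iota'$, I search the automorphism group of the $3$-regular, $14$-vertex exchange graph (drawn in Figure~\ref{figure_associahedron_A3}) for an involution that swaps the pairs within the three size-$2$ equivalence classes of Table~\ref{table_equivalence_f_vector}, namely $2 \leftrightarrow 9,\ 4 \leftrightarrow 5,\ 6 \leftrightarrow 11$; propagating these required swaps through the adjacency data forces $1 \leftrightarrow 7,\ 3 \leftrightarrow 10$, while $\bm s_0,\bm s_8,\bm s_{12},\bm s_{13}$ must be fixed.

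A routine edge-by-edge check against the $21$ edges of the exchange graph confirms that $\iota$ and $\iota'$ are commuting involutive graph automorphisms. Once this is established, Proposition~\ref{prop_unimodular} immediately yields $\Delta_{\bm s} \cong \Delta_{\iota(\bm s)}$ and $\Delta_{\bm s} \cong \Delta_{\iota'(\bm s)}$ for every seed, since the $\iota$- and $\iota'$-orbits lie within single rows of Table~\ref{table_equivalence_f_vector}. The resulting $\Z_2 \times \Z_2$-action has six orbits $\{0,13\},\ \{8,12\},\ \{1,3,7,10\},\ \{2,9\},\ \{4,5\},\ \{6,11\}$; by Proposition~\ref{prop_unimodular} the first two are merged into Case 1, and the remaining four orbits each constitute one of Cases 2--5.

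For the ``no other symmetries'' claim, the plan is to classify every graph automorphism of the exchange graph that preserves the partition into the five equivalence classes. The key combinatorial invariant is the multiset of case labels attached to the three neighbors of a given vertex. For example, this invariant separates Case 1 into the pairs $\{0,13\}$ (with neighbor-class multiset $\{1,2,2\}$) and $\{8,12\}$ (with multiset $\{1,5,5\}$), so Case 1 admits at most four equivalence-preserving permutations, precisely those in $\langle\iota,\iota'\rangle$. Propagating these four possibilities through the rest of the graph using $3$-regularity and the two-element classes (Cases 3, 4, 5), a finite case analysis shows that every equivalence-preserving automorphism arises from an element of $\langle\iota,\iota'\rangle$. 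The main obstacle is this final enumeration, since the four Case-$2$ seeds all carry the same neighbor-class multiset $\{1,3,4\}$ and must therefore be distinguished by going one step deeper into the adjacency structure; nonetheless the smallness of the exchange graph keeps the case check tractable.
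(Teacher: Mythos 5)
Your proposal is correct and, for the existence part, follows the same route as the paper: both identify the two involutions as the reflections of the exchange graph in Figure~\ref{figure_exchange_graph_A3} ($\iota\colon 0\leftrightarrow 13,\ 1\leftrightarrow 3,\ 7\leftrightarrow 10,\ 8\leftrightarrow 12$; $\iota'\colon 1\leftrightarrow 7,\ 2\leftrightarrow 9,\ 3\leftrightarrow 10,\ 4\leftrightarrow 5,\ 6\leftrightarrow 11$) and then invoke Proposition~\ref{prop_unimodular}, since every $\iota$- and $\iota'$-orbit sits inside a single row of Table~\ref{table_equivalence_f_vector}. Where you go beyond the paper is the ``no other symmetries'' claim: the paper's proof simply asserts the $(\Z_2\times\Z_2)$-symmetry visible in the figure, whereas you actually classify the class-preserving graph automorphisms via the neighbor-class multiset invariant (which splits Case~1 into $\{0,13\}$ with multiset $\{1,2,2\}$ and $\{8,12\}$ with $\{1,5,5\}$) followed by propagation through the $3$-regular adjacency data; this is a genuine and worthwhile addition, and the enumeration does close up as you claim. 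One small imprecision: at the point where you say Case~1 ``admits at most four equivalence-preserving permutations, precisely those in $\langle\iota,\iota'\rangle$,'' the restriction of $\langle\iota,\iota'\rangle$ to $\{0,8,12,13\}$ consists of only two permutations (the identity and $(0\,13)(8\,12)$), while the multiset invariant alone permits four; the two spurious ones (swapping only one of the pairs) are killed only at the propagation step, since $8$ is adjacent to $0$ but not to $13$. This does not affect the validity of the argument, but the sentence should be reworded accordingly.
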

\begin{proof} 
By analyzing the classification provided in Proposition~\ref{prop_unimodular}, we obtain Figure~\ref{figure_exchange_graph_A3} having $(\Z_2 \times \Z_2)$-symmetry. More precisely, there exist two involutions $\iota$ and $\iota'$ on the exchange graph for $\flag(\C^4)$ such that 
\[
\Delta_{\bm s} \cong \Delta_{\iota(\bm s)} \quad \text{ and } \quad 
\Delta_{\bm s} \cong \Delta_{\iota'(\bm s)}. 
\]
This proves the statement. 
\end{proof}
\begin{figure}[b]
\begin{tikzpicture}[scale=1.3]
\coordinate (0) at (-1.5,0);
\coordinate (1) at (-0.5, -1);
\coordinate (2) at (0,-0.5); 
\coordinate (3) at (0.5,-1); 
\coordinate (4) at (0,-1.5);
\coordinate (5) at (0,1.5);
\coordinate (6) at (0,2.5);
\coordinate (7) at (-0.5,1);
\coordinate (8) at (-2.5,0);
\coordinate (9) at (0,0.5);
\coordinate (10) at (0.5,1);
\coordinate (11) at (0,-2.5);
\coordinate (12) at (2.5,0);
\coordinate (13) at (1.5,0); 

\foreach \x in {0,...,13}{
	\node at (\x) {$\x$};	
}

\draw[thick, red, dashed, text =black] (-3.5,0)--(3.5,0) node[at end] {$\updownarrow \iota'$};
\draw[thick, red, dashed, text=black] (0,-3.5)--(0,3.5) node[at end] {$\leftrightarrow$}; 
\node at (0.3,3.5) {$\iota$};

\draw (0)--(7)--(9)--(2)--(1)--(0)
(0)--(8)--(6)--(5)--(7)
(5)--(10)--(9)
(10)--(13)--(3)--(2)
(1)--(4)--(11)--(8)
(3)--(4)
(11)--(12)--(6)
(12)--(13); 

	\foreach \x in {0, 8, 12, 13}{
	\node[fill=yellow!20, circle, draw = black, inner sep = 0cm, minimum size= 0.65cm] at (\x) {$\x$};
}

	\foreach \x in {11,6}{
	\node[fill=green!20, circle, draw = black, inner sep = 0cm, minimum size= 0.65cm] at (\x) {$\x$};
}

	\foreach \x in {9,2}{
	\node[fill=red!15, circle, draw = black, inner sep = 0cm, minimum size= 0.65cm] at (\x) {$\x$};
}

	\foreach \x in {7,10,1,3}{
	\node[fill=blue!15, circle, draw = black, inner sep = 0cm, minimum size= 0.65cm] at (\x) {$\x$};
}

	\foreach \x in {4,5}{
	\node[fill=orange!15, circle, draw = black, inner sep = 0cm, minimum size= 0.65cm] at (\x) {$\x$};
}
\end{tikzpicture}
\caption{The exchange graph  for $\flag(\C^4)$}\label{figure_exchange_graph_A3}
\end{figure}

\begin{remark}
The involution $\iota$ corresponds to the involution on the Dynkin diagram of type~$A_3$ which interchanges $s_1$ and $s_3$. For instance, the seed ${\bm s}_{t_0}$ corresponds to the reduced word $(1,2,1,3,2,1)$, and the involution on the Dynkin diagram sends it to $(3,2,3,1,2,3)$. The reduced word $(3,2,3,1,2,3)$ corresponds to the seed ${\bm s}_{t_{13}}$. 
We note that the involution $\iota$ is obviously extended to general type $A_{n-1}$ using the automorphism of ${\rm SL}_n(\mathbb{C})$ induced from the involution of the Dynkin diagram.  
\end{remark}
Before providing unimodular maps among the polytopes, we notice that the polytope in Case $1$ has $12$ facets and the other polytopes have more facets. Moreover, the polytope $\Delta_{{\bm s}_{t_0}}$ is unimodularly equivalent to the Gelfand--Cetlin polytope. Accordingly, we have the following observation. 
\begin{corollary}\label{cor_GC_type}
Let $\Delta_{\bm s_t} = \Delta(\flag(\C^4), \mathcal{L}_{\lambda}, v_t, \tau_{\lambda})$ be the Newton--Okounkov body arising from the cluster structure. Then the following are equivalent. 
\begin{enumerate}
\item The polytope $\Delta_{\bm s_t}$ is unimodularly equivalent to the Gelfand--Cetlin polytope. 
\item The polytope $\Delta_{\bm s_t}$ has exactly $12$ facets. 
\end{enumerate}
Moreover, if one of the above holds, then the seed ${\bm s}_t$ comes from a reduced word of the longest element. 
\end{corollary}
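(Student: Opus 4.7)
The plan is to read the corollary off directly from the classification already established in Proposition~\ref{prop_unimodular} together with the $f$-vector data in Table~\ref{table_equivalence_f_vector}. The key observation is that the number of facets of a rational convex polytope is an invariant of its unimodular equivalence class, and it appears as the next-to-last entry of the $f$-vector.

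For the implication (1)$\Rightarrow$(2), I would recall that the Gelfand--Cetlin polytope associated to $\flag(\C^4)$ with $\lambda=2\varpi_1+2\varpi_2+2\varpi_3$ has exactly $12$ facets (the six GC inequalities coming from each side of the triangular array of GC coordinates), and then invoke unimodular invariance of the facet count.

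For (2)$\Rightarrow$(1), I would use Proposition~\ref{prop_unimodular}, which asserts that there are exactly five unimodular equivalence classes with the $f$-vectors listed in Table~\ref{table_equivalence_f_vector}. Inspecting the seventh entry of each $f$-vector in that table, Case 1 has $12$ facets, Cases 2, 3, and 5 have $13$ facets, and Case 4 has $14$ facets. Hence any polytope $\Delta_{\bm s_t}$ with exactly $12$ facets must lie in the Case 1 equivalence class, whose representatives are indexed by the seeds $\{0,8,12,13\}$. In particular $\Delta_{\bm s_t}\cong\Delta_{\bm s_{t_0}}$, and by Theorem~\ref{thm_Fujita_Oya}(1) applied to $\bm i_0=(1,2,1,3,2,1)$, the polytope $\Delta_{\bm s_{t_0}}$ is unimodularly equivalent to the string polytope $\Delta_{\bm i_0}(\lambda)$. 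Finally, the string polytope for the reduced word $(1,2,1,3,2,1)$ is classically known to coincide (up to unimodular equivalence) with the Gelfand--Cetlin polytope; one may also verify this directly by comparing the explicit inequalities displayed in Section~\ref{section_NOBYs_flag4} with the standard GC pattern inequalities.

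For the \emph{moreover} statement, I would read off from Table~\ref{table_seed_reduced_words} that each of the four seeds $t_0, t_8, t_{12}, t_{13}$ constituting Case 1 is associated to a reduced word of $w_0^{(4)}$, namely $(1,2,1,3,2,1)$, $(2,1,2,3,2,1)$, $(2,3,2,1,2,3)$, and $(3,2,1,3,2,3)$, respectively. The only real content of the corollary is therefore the classification itself; once Proposition~\ref{prop_unimodular} and the $f$-vector table are accepted, the entire argument is bookkeeping. The hard part has already been absorbed into the computer-assisted enumeration that produced Table~\ref{table_equivalence_f_vector}, so no further obstacle remains.
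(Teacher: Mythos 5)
Your argument is correct and matches the paper's own reasoning: the paper likewise derives the corollary from Proposition~\ref{prop_unimodular} and Table~\ref{table_equivalence_f_vector}, noting that only the Case~1 class has $12$ facets (the others have $13$ or $14$), that the Case~1 polytope is unimodularly equivalent to the Gelfand--Cetlin polytope via the string-polytope identification for $\bm i_0=(1,2,1,3,2,1)$ (cf.\ \cite[Theorem~A]{CKLP21}), and that the Case~1 seeds $0,8,12,13$ all come from reduced words. No gaps; the bookkeeping is exactly what the paper intends.
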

One can think of that Corollary~\ref{cor_GC_type} generalizes the result~\cite[Theorem~A]{CKLP21}, which says that the Gelfand--Cetlin type string polytopes are characterized by the number of facets. Indeed, the above corollary shows that the number of facets also can be used to characterize the Newton--Okounkov polytopes $\Delta_{\bm s_t}$ which are unimodularly equivalent to the Gelfand--Cetlin polytope.

\subsection{Proof of Proposition~\ref{prop_unimodular}}\label{subsection_unimodular}
To find unimodular equivalence classes, we first consider \emph{combinatorial equivalence} classes using the SAGE. Indeed, the SAGE command \texttt{is\_combinatorially\_isomorphic()} is used to determine whether two polytopes are combinatorially equivalent or not. After then, we find an explicit unimodular map among the polytopes having the same combinatorial structure. 

\medskip 

\textsf{Case 1. } There are four seeds with numbers $0,8,12,13$ producing the combinatorially same polytopes. Note that these four seeds come from reduced words of longest elements. See Table~\ref{table_seed_reduced_words}. 
Moreover, it is known from~\cite[Theorem~A]{CKLP21} that they produce the string polytopes unimodularly equivalent to the Gelfand--Cetlin polytope. Indeed, the seed ${\bm s}_{t_0}$ is obtained from the reduced word ${\bm i}_0 = (1,2,1,3,2,1)$ and the facet normal vectors of the polytope given by the seed ${\bm s}_{t_0}$ are the column vectors of the matrix in~\eqref{eq_normal_vectors_121321}. 

\medskip 

\textsf{Case 2. } 
There are four seeds with numbers $1,3,7,10$ producing the combinatorially same polytopes. Seeds $t_7$ and $t_{10}$ come from the reduced words, indeed, $t_7$ corresponds to $(1,2,3,2,1,2)$ and $t_{10}$ corresponds to $(3,2,1,2,3,2)$. Since the involution on the Dynkin diagram provides 
a unimodular map among string polytopes, we obtain 
\[
\Delta_{{\bm s}_{t_7}} \cong \Delta_{{\bm s}_{t_{10}}}.
\]
On the other hand, considering the coordinate change $(g_1,g_2,g_3,g_4,g_5,g_6)$ $\mapsto (g_3, g_2, g_1, g_6, g_5, g_4)$, we obtain 
\[
\Delta_{{\bm s}_{t_1}} \cong \Delta_{{\bm s}_{t_{3}}}.
\]
To describe a unimodular map between polytopes $\Delta_{{\bm s}_{t_1}}$ and $\Delta_{{\bm s}_{t_7}}$, we consider the facet normal vectors of the polytope $\Delta_{{\bm s}_{t_7}}$. Note that the polytope $\Delta_{{\bm s}_{t_7}}$ is unimodularly equivalent to the string polytope and we get the following facet normal vectors using Gleizer--Postnikov's rigorous paths in~\cite{GP00} (also, see~\cite[Section~4]{CKLP21}). 
We present a matrix whose column vectors are the facet normal vectors. 
\[
\begin{bmatrix}
1 & 0 & 0 & 0 & 0 & 0 & 0 & -1 & 0 & 0 & 0 & 0 & 0 \\
1 & 1 & 1 & 0 & 0 & 0 & 0 & 0 & -1 & 0 & 0 & 0 & 0 \\
1 & 1 & 1 & 1 & 0 & 0 & 0 & 0 & 0 & -1 & 0 & 0 & 0 \\
1 & 1 & 0 & 0 & 1 & 0 & 0 & 0 & -1 & 0 & -1 & 0 & 0 \\
0 & 0 & 0 & 0 & 1 & 1 & 0 & -1 & 0 & 0 & 0 & -1 & 0 \\
0 & 0 & 0 & 0 & 0 & 0 & 1 & 0 & -1 & 0 & 0 & 0 & -1
\end{bmatrix}. 
\]
One can see that the coordinate change $(g_1,g_2,g_3,g_4,g_5,g_6)$ $\mapsto (-g_1, -g_3, -g_6, -g_2, -g_4, -g_5)$ induces the unimoular map from $\Delta_{{\bm s}_{t_7}}$ to $\Delta_{{\bm s}_{t_1}}$. 

\medskip 

\textsf{Case 3. }
There are two seeds with numbers $2,9$ producing the combinatorially same polytopes. Note that the seed ${\bm s}_{t_9}$ corresponds to the reduced word $(1,3,2,3,1,2)$. Using Gleizer--Postnikov's rigorous paths in~\cite{GP00}, we obtain the following matrix whose column vectors are the facet normal vectors of the polytope $\Delta_{{\bm s}_{t_9}}$.
\[
\begin{bmatrix}
1 & 0 & 0 & 0 & 0 & 0 & 0 & -1 & 0 & 0 & 0 & 0 & 0 \\
0 & 1 & 0 & 0 & 0 & 0 & 0 & 0 & -1 & 0 & 0 & 0 & 0 \\
1 & 1 & 1 & 1 & 0 & 0 & 0 & 0 & 0 & -1 & 0 & 0 & 0 \\
1 & 0 & 1 & 0 & 1 & 0 & 0 & 0 & -1 & 0 & -1 & 0 & 0 \\
0 & 1 & 0 & 1 & 0 & 1 & 0 & -1 & 0 & 0 & 0 & -1 & 0 \\
0 & 0 & 0 & 0 & 0 & 0 & 1 & 0 & 0 & -1 & 0 & 0 & -1
\end{bmatrix}.
\]
One can see that the coordinate change $(g_1,g_2,g_3,g_4,g_5,g_6)$ $\mapsto (-g_3, -g_1, -g_2, -g_4, -g_6, -g_5)$ induces a unimodular map from $\Delta_{{\bm s}_{t_9}}$ to $\Delta_{{\bm s}_{t_2}}$. 

\medskip 

\textsf{Case 4. }
There are two seeds with numbers $4,5$ producing the combinatorially same polytopes. 
One can see that the coordinate change $(g_1,g_2,g_3,g_4,g_5,g_6)$ $\mapsto (-g_2, -g_3, -g_1, -g_4, -g_5, -g_6)$ induces a unimodular map from $\Delta_{{\bm s}_{t_4}}$ to $\Delta_{{\bm s}_{t_5}}$.

\medskip 

\textsf{Case 5. }
There are two seeds with numbers $6,11$ producing the combinatorially same polytopes. 
Note that the seed ${\bm s}_{t_{11}}$ corresponds to the reduced word $(2,1,3,2,3,1)$. Using Gleizer--Postnikov's rigorous paths in~\cite{GP00}, we obtain the following matrix whose column vectors are the facet normal vectors of the polytope $\Delta_{{\bm s}_{t_{11}}}$.
\[
\begin{bmatrix}
1 & 0 & 0 & 0 & 0 & 0 & 0 & -1 & 0 & 0 & 0 & 0 & 0 \\
1 & 1 & 1 & 0 & 0 & 0 & 0 & 0 & -1 & 0 & 0 & 0 & 0 \\
1 & 1 & 0 & 1 & 0 & 0 & 0 & 0 & 0 & -1 & 0 & 0 & 0 \\
1 & 1 & 1 & 1 & 1 & 0 & 0 & -1 & 0 & 0 & -1 & 0 & 0 \\
0 & 0 & 0 & 0 & 0 & 1 & 0 & 0 & 0 & -1 & 0 & -1 & 0 \\
0 & 0 & 0 & 0 & 0 & 0 & 1 & 0 & -1 & 0 & 0 & 0 & -1
\end{bmatrix}.
\]
One can see that the coordinate change $(g_1,g_2,g_3,g_4,g_5,g_6)$ $\mapsto (-g_1, -g_3, -g_2, -g_5, -g_4, -g_6)$ induces a unimodular map from $\Delta_{{\bm s}_{t_{11}}}$ to $\Delta_{{\bm s}_{t_6}}$.

\begin{remark}
We notice that the unimodular transformations provided in this section do not come from the compositions of tropicalized mutations considered in Section~\ref{section_NOBYs_flag4}. Indeed, the composition of tropicalized mutations does \emph{not} need to be an affine transformation. 
\end{remark}

\begin{landscape}
\begin{tikzpicture}[xscale = 0.8, yscale = 0.8, every node/.style={scale=0.8}]

\node (t0) at (0,0) {
$\begin{tikzcd}[row sep = 0, column sep = 1em]
&& 4 \\
& 2 \arrow[ur] \arrow[dr] && 5 \arrow[ll] \\
1 \arrow[ur] && 3 \arrow[ll] \arrow[ur] && 6 \arrow[ll]
\end{tikzcd}$};
\node[fill=yellow!20, circle, draw = black, inner sep = 0cm, minimum size= 0.65cm] at ([shift={(145:1.3)}]t0) {$0$};

\node(t1) at (0,3) {
$\begin{tikzcd}[row sep = 0, column sep = 1em]
&& 4 \arrow[dl] \\
& 2 \arrow[dl] \arrow[rr] && 5 \\
1 \arrow[rruu, bend left] && 3 \arrow[ul] && 6 \arrow[ll]
\end{tikzcd}$};
\node[ fill=blue!15, circle, draw = black, inner sep = 0cm, minimum size= 0.65cm] at ([shift={(145:1.3)}]t1) {$1$};

\node (t2) at (-5,4.5) {
$\begin{tikzcd}[row sep = 0, column sep = 1em]
&& 4 \arrow[dl] \\
& 2 \arrow[dl] \arrow[rr]\arrow[rd] && 5 \\
1 \arrow[rruu, bend left] && 3 \arrow[rr] && 6 \arrow[lllu]
\end{tikzcd}$
};
\node[ fill=red!15, circle, draw = black, inner sep = 0cm, minimum size= 0.65cm] at ([shift={(145:1.3)}]t2) {$2$};

\node (t3) at (0, 6){
$\begin{tikzcd}[row sep = 0, column sep = 1em]
&& 4  \arrow [lldd, bend right] \\
& 2  \arrow[rr]\arrow[rd] && 5 \\
1 \arrow[ur]  && 3 \arrow[rr] && 6 \arrow[lllu]
\end{tikzcd}$
};
\node[ fill=blue!15, circle, draw = black, inner sep = 0cm, minimum size= 0.65cm] at ([shift={(145:1.3)}]t3) {$3$};

\node (t4) at (5,4.5){
$\begin{tikzcd}[row sep = 0, column sep = 1em]
&& 4  \arrow [lldd, bend right] \\
& 2  \arrow[rr]  && 5 \\
1 \arrow[ur]  && 3 \arrow[ul] && 6  \arrow[ll]
\end{tikzcd}$
};
\node[fill=orange!15, circle, draw = black, inner sep = 0cm, minimum size= 0.65cm] at ([shift={(145:1.3)}]t4) {$4$};

\node (t7) at (-5, -1.5){
$\begin{tikzcd}[row sep = 0, column sep = 1em]
&& 4  \\
& 2 \arrow[ur] && 5 \arrow[ld] \\
1 \arrow[rr] && 3 \arrow[ul] \arrow[rr] && 6 \arrow[llll, bend left]
\end{tikzcd}$
};
\node[fill=blue!15, circle, draw = black, inner sep = 0cm, minimum size= 0.65cm] at ([shift={(145:1.3)}]t7) {$7$};

\node (9a) at (-10, 0){
$\begin{tikzcd}[row sep = 0, column sep = 1em]
&& 4  \\
& 2 \arrow[ur] && 5 \arrow[ld] \\
1 \arrow[rr] && 3 \arrow[ul] \arrow[rr] && 6 \arrow[llll, bend left]
\end{tikzcd}$
};
\node[fill=red!15, circle, draw = black, inner sep = 0cm, minimum size= 0.65cm] at ([shift={(145:1.3)}]9a) {$9$};

\node (9b) at (-10, 3){
$\begin{tikzcd}[row sep = 0, column sep = 1em]
&& 4 \arrow[dd] \\
& 2  \arrow[rrrd] \arrow[ru]  && 5 \arrow[ll] \\
1 \arrow[ru] && 3 \arrow[ul] && 6 \arrow[llll, bend left]
\end{tikzcd}$
};
\node at ($(9a)!0.5!(9b)$) {\rotatebox{90}{$=$}};

\node (t10) at (-10, -3){
$\begin{tikzcd}[row sep = 0, column sep = 1em]
&& 4 \arrow[dl] \\
& 2  \arrow[dr] && 5 \arrow[dl] \\
1 \arrow[rrrr, bend right] && 3 \arrow[ll] \arrow[uu] && 6 
\end{tikzcd}$
};
\node[fill=blue!15, circle, draw = black, inner sep = 0cm, minimum size= 0.65cm] at ([shift={(145:1.3)}]t10) {$10$};

\node (5a) at (-5, -4.5){
$\begin{tikzcd}[row sep = 0, column sep = 1em]
&& 4  \\
& 2  \arrow[ur] && 5 \arrow[dl] \\
1 \arrow[rrrr, bend right] && 3 \arrow[ll] \arrow[ul] && 6 
\end{tikzcd}$
};
\node[fill=orange!15, circle, draw = black, inner sep = 0cm, minimum size= 0.65cm] at ([shift={(145:1.3)}]5a) {$5$};

\node (5b) at (0,-6){
$\begin{tikzcd}[row sep = 0, column sep = 1em]
&& 4  \\
& 2  \arrow[ur] && 5 \arrow[dlll] \\
1 \arrow[ur] \arrow[rr] && 3 \arrow[rr] && 6 
\end{tikzcd}$
};

\node at ($(5a)!0.5!(5b)$) {\rotatebox{155}{$=$}};

\node (t8) at (5,-1.5){
$\begin{tikzcd}[row sep = 0, column sep = 1em]
&& 4  \\
& 2  \arrow[ur] \arrow[dl] && 5 \arrow[ll] \\
1 \arrow[rr] && 3 \arrow[ur] && 6 \arrow[ll]
\end{tikzcd}$
};
\node[fill=yellow!20, circle, draw = black, inner sep = 0cm, minimum size= 0.65cm] at ([shift={(145:1.3)}]t8) {$8$};

\node (t6) at (5,-4.5){
$\begin{tikzcd}[row sep = 0, column sep = 1em]
&& 4  \\
& 2  \arrow[ur] \arrow[dl] && 5 \arrow[ll] \arrow [ld] \\
1 \arrow[rrru] && 3 \arrow[ll] \arrow[rr] && 6 
\end{tikzcd}$
};
\node[fill=green!20, circle, draw = black, inner sep = 0cm, minimum size= 0.65cm] at ([shift={(145:1.3)}]t6) {$6$};

\node (t12) at (10, -3){
$\begin{tikzcd}[row sep = 0, column sep = 1em]
&& 4 \arrow[ld] \\
& 2  \arrow[rr] && 5 \arrow [ld] \\
1 \arrow[ru] && 3 \arrow[ll] \arrow[rr] && 6 
\end{tikzcd}$
};
\node[fill=yellow!20, circle, draw = black, inner sep = 0cm, minimum size= 0.65cm] at ([shift={(145:1.3)}]t12) {$12$};

\node[fill=yellow!20, circle, draw = black, inner sep = 0cm, minimum size= 0.65cm] at ([shift={(90:2.5)}]t3) (13a) {$13$}; 
\node[fill=yellow!20, circle, draw = black, inner sep = 0cm, minimum size= 0.65cm] at ([shift={(-90:2.5)}]t10) (13b) {$13$};

\node (t13) at (10,-7) {
$\begin{tikzcd}[row sep = 0, column sep = 1em]
&& 4 \arrow[ld] \\
& 2  \arrow[rr] \arrow[ld] && 5 \arrow [ld] \\
1 \arrow[rr] && 3 \arrow[lu] \arrow[rr] && 6 
\end{tikzcd}$
};
\node[fill=yellow!20, circle, draw = black, inner sep = 0cm, minimum size= 0.65cm] at ([shift={(145:1.3)}]t13) {$13$};

\node (11a) at (10, 0){
$\begin{tikzcd}[row sep = 0, column sep = 1em]
&& 4 \arrow[ld] \\
& 2  \arrow[rr] && 5 \arrow [llld] \\
1 \arrow[ru] \arrow[rr] && 3 \arrow[ru]  && 6 \arrow[ll]
\end{tikzcd}$
};
\node[fill=green!20, circle, draw = black, inner sep = 0cm, minimum size= 0.65cm] at ([shift={(145:1.3)}]11a) {$11$};

\node (11b) at(10, 3){
$\begin{tikzcd}[row sep = 0, column sep = 1em]
&& 4 \arrow[lldd, bend right] \\
& 2  \arrow[ld] \arrow[rd] && 5 \arrow [ll] \\
1 \arrow[rrru] && 3 \arrow[ru]  && 6 \arrow[ll]
\end{tikzcd}$
};
\node at ($(11a)!0.5!(11b)$) {\rotatebox{90}{$=$}};

\draw (t0)--(t1) node[midway, right] {$\mu_2$};
\draw (t1)--(t2) node[midway, below] {$\mu_3$};
\draw (t2)--(t3) node[midway, above] {$\mu_1$};
\draw (t3)--(t4) node[midway, above] {$\mu_3$};
\draw (t4)--(t1) node[midway, below] {$\mu_1$};
\draw (t3)--(13a) node[midway, right] {$\mu_2$};
\draw (t0)--(t7) node[midway, below] {$\mu_3$};
\draw (t7)--(9a) node[midway, below] {$\mu_2$};
\draw (t2)--(9b) node[midway, above] {$\mu_2$};
\draw (9a)--(t10) node[midway, left] {$\mu_1$};
\draw (t10)--(5a) node[midway, below] {$\mu_2$};
\draw (5a)--(t7) node[midway, left] {$\mu_1$};
\draw (t0)--(t8) node[midway, below] {$\mu_1$};
\draw (t8)--(t6) node[midway, right] {$\mu_3$};
\draw (t6)--(5b) node[midway, below] {$\mu_1$};
\draw (t6)--(t12) node[midway, below] {$\mu_2$};
\draw (t12)--(t13) node[midway, right] {$\mu_1$};
\draw (t10)--(13b) node[midway, left] {$\mu_3$};
\draw (t12)--(11a) node[midway, right] {$\mu_3$};
\draw (t8)--(11a) node[midway, below] {$\mu_2$};
\draw (11b)--(t4) node[midway, below] {$\mu_2$};
\end{tikzpicture}

\label{land_scape_page}
\end{landscape}


\end{document}